\let\OLDthebibliography\thebibliography
\renewcommand\thebibliography[1]{
  \OLDthebibliography{#1}
  \setlength{\parskip}{1pt}
  \setlength{\itemsep}{0pt plus 0.0ex}
}
\def\numberlikeadb{\global\def\theequation{\thesection.\arabic{equation}}}
\newtheorem{theorem}{Theorem}[section]
\newtheorem{corollary}[theorem]{Corollary}
\newtheorem{remark}[theorem]{Remark}
\begin{document} 


\title{Infinite Divisibility of the Product of Two Correlated Normal Random Variables and Exact Distribution of the Sample Mean\footnote{{\bf{This article is dedicated to Professor Mourad Ismail on the occasion of his 80th birthday.}}}}

\author{Robert E. Gaunt\footnote{Department of Mathematics, The University of Manchester, Oxford Road, Manchester M13 9PL, UK, 
robert.gaunt@manchester.ac.uk; Saralees.Nadarajah@manchester.ac.uk},\: Saralees Nadarajah$^\dagger$\ and Tibor K. Pog\'any\footnote{Institute of Applied Mathematics, \'Obuda University, 1034 Budapest, Hungary, pogany.tibor@nik.uni-obuda.hu}  
\footnote{Faculty of Maritime Studies, University of Rijeka, 51000 Rijeka, Croatia, tibor.poganj@uniri.hr}}

\date{} 
\maketitle

\vspace{-5mm}

\begin{abstract} We prove that the distribution of the product of two correlated normal random variables with arbitrary means and arbitrary variances is infinitely divisible. We also obtain exact formulas for the probability density function of the sum of independent copies of such random variables.
\end{abstract}

\noindent{{\bf{Keywords:}}} Product of correlated normal random variables; infinite divisibility; sum of independent random variables; probability density function; confluent hypergeometric function; modified Bessel function

\noindent{{{\bf{AMS 2020 Subject Classification:}}} Primary 60E05; 60E07; 62E15; Secondary 33C15; 60E10
}


\section{Introduction}

Let $(X, Y)$ be a bivariate normal random vector with mean vector $(\mu_X,\mu_Y)\in\mathbb{R}^2$, variances $(\sigma_X^2,\sigma_Y^2)\in(0,\infty)^2$ 
and correlation coefficient $\rho\in(-1,1)$. Since the work of \cite{craig,wb32} in the 1930's, the distribution of the product 
$Z=XY$
has received much attention in the statistics 
literature (see \cite{gaunt22,np16} for an overview of some of the literature), and has found numerous applications, with 
recent examples including chemical physics \cite{hey}, condensed matter physics \cite{ach} and astrophysics \cite{cac}. 
The mean $\overline{Z}_n=n^{-1}\sum_{i=1}^nZ_i$, where $Z_1,\ldots,Z_n$ are independent copies of the product $Z$, has also found 
applications in fields such as quantum cosmology \cite{gr}, astrophysics \cite{man} and electrical engineering \cite{ware}.



Recently, \cite{cui} obtained the following exact formula for the probability density function (PDF) of the product $Z$: 
   \begin{align} \label{pdf}
	    f_{Z}(x) &= \frac{1}{\pi}\exp\bigg\{-\frac{1}{2(1-\rho^2)}\bigg(\frac{\mu_X^2}{\sigma_X^2}+\frac{\mu_Y^2}{\sigma_Y^2} 
			          - \frac{2\rho(x+\mu_X\mu_Y)}{\sigma_X\sigma_Y}\bigg)\bigg\} \notag \\
               &\quad \times \sum_{k=0}^\infty\sum_{j=0}^{2k}\frac{x^{2k-j}|x|^{j-k}\sigma_X^{j-k-1}}{(2k)!(1-\rho^2)^{2k+1/2}
							    \sigma_Y^{j-k+1}}\binom{2k}{j}\bigg(\frac{\mu_X}{\sigma_X^2}-\frac{\rho \mu_Y}{\sigma_X\sigma_Y}\bigg)^j \notag \\
               &\quad \times \bigg(\frac{\mu_Y}{\sigma_Y^2}-\frac{\rho \mu_X}{\sigma_X\sigma_Y}\bigg)^{2k-j}K_{j-k} 
							    \bigg(\frac{|x|}{\sigma_X\sigma_Y(1-\rho^2)}\bigg), \quad x\in\mathbb{R},
   \end{align}
where $K_\nu(x)$  is a modified Bessel function of the second 
kind.
In Appendix \ref{appa}, we define the modified Bessel function of the second kind and the other special functions used in this paper. We also state some relevant basic properties of these special functions that are required in this paper; we will indeed draw heavily on special function theory in this paper.

If one of the means, say $\mu_Y$, is equal to zero and $\rho=0$, then the PDF \eqref{pdf} reduces to a single infinite 
series (see \cite{cui,simon}): 
   \begin{equation} \label{simple}
      f_Z(x) = \frac1{\pi\sigma_X\sigma_Y}\exp\bigg(\!-\frac{\mu_X^2}{2\sigma_X^2}\bigg)\sum_{j=0}^\infty 
			         \frac{\mu_X^{2j}|x|^j}{(2j)!\sigma_X^{3j}\sigma_Y^j} K_j\bigg(\frac{|x|}{\sigma_X\sigma_Y}\bigg), 
							 \quad x\in\mathbb{R}.   
   \end{equation} 
We further observe that if $\mu_X/\sigma_X=\rho\mu_Y/\sigma_Y$ or $\mu_Y/\sigma_Y=\rho\mu_X/\sigma_X$, then the formula (\ref{pdf}) simplifies to a single infinite series. For example, if $\mu_X/\sigma_X=\rho\mu_Y/\sigma_Y$, then, for $x\in\mathbb{R}$,
  \begin{align} 
	    f_{Z}(x) &= \frac1{\pi\sigma_X\sigma_Y\sqrt{1-\rho^2}}\exp\bigg\{-\frac{\mu_Y^2}{2\sigma_Y^2}+\frac{\rho x}{\sigma_X\sigma_Y(1-\rho^2)}\bigg\} \notag \\
               &\quad \times \sum_{j=0}^\infty\frac{1}{(2j)!
							    }\bigg(\frac{\mu_Y}{\sigma_Y}\bigg)^{2j}\bigg(\frac{|x|}{\sigma_X\sigma_Y}\bigg)^j K_{j} 
							    \bigg(\frac{|x|}{\sigma_X\sigma_Y(1-\rho^2)}\bigg).\label{red1}
   \end{align}  
In the case $\mu_X=\mu_Y=0$, the PDF (\ref{pdf}) simplifies further to
\begin{equation}\label{single}f_{Z}(x)=\frac{1}{\pi\sigma_X\sigma_Y\sqrt{1-\rho^2}}\exp\bigg(\frac{\rho  x}{\sigma_X\sigma_Y(1-\rho^2)} \bigg)K_{0}\bigg(\frac{ |x|}{\sigma_X\sigma_Y(1-\rho^2)}\bigg), \quad x\in\mathbb{R},
\end{equation}   
and formula (\ref{single}) 
generalises in a natural manner to yield the following 
exact formula for the PDF of $\overline{Z}_n=n^{-1}\sum_{i=1}^nZ_i$, which was derived independently by \cite{man,np16,wb32}:
\begin{equation}\label{bar}f_{\overline{Z}_n}(x)=\frac{2^{(1-n)/2}|x|^{(n-1)/2}}{s_n^{(n+1)/2}\sqrt{\pi(1-\rho^2)}\Gamma(n/2)}\exp\bigg(\frac{\rho  x}{s_n(1-\rho^2)} \bigg)K_{\frac{n-1}{2}}\bigg(\frac{ |x|}{s_n(1-\rho^2)}\bigg), \quad x\in\mathbb{R},
\end{equation}
where $s_n=\sigma_X\sigma_Y/n$, and \cite{gauntprod} observed that $\overline{Z}_n$ is variance-gamma distributed. 
However, until now an exact formula for the PDF of the mean $\overline{Z}_n$ was not known for the case of a general mean vector $(\mu_X,\mu_Y)\in\mathbb{R}^2$. 

In this paper, we obtain exact formulas for the PDF of the sum $S_n=\sum_{i=1}^n Z_i$
for the full range of parameter values; exact formulas for the PDF of the sample mean $\overline{Z}_n$ follow from a simple rescaling, as noted in Remark \ref{remscale}. We provide an infinite series representation of the PDF (Theorem \ref{thm1}) and an integral representation of the PDF (Theorem \ref{thmint}). As is the case for the PDF of the product $Z$, our series representation of the PDF of the sum $S_n$ simplifies for certain parameter values, and we note these simplifications in Remark \ref{remsimple} and Theorem \ref{rho0}. The integral representation of the PDF is rather useful and has recently been applied by \cite{gz24} to derive the entire asymptotic expansion for the PDF and tail probabilities of the sum $S_n$ for the full range of parameter values.

We also prove that the distribution of the product $Z$ is infinitely divisible for all parameter values (Theorem \ref{thmif}). Our proof is constructive and relies on the analysis we used to prove Theorem \ref{thm1}. 
Over the years, there has been much interest in determining whether given probability distributions are infinitely divisible.
Infinite divisibility has been established for a number of classical distributions, such as the normal, gamma and stable distributions (immediate from their characteristic functions), Student's $t$ distribution \cite{g76,i77}, the $F$-distribution \cite{ik79}, the lognormal distribution \cite{t77}, the hyperbolic and generalized inverse Gaussian distributions \cite{bh77} and the von-Mises Fisher distribution \cite{kent}, as well as more exotic distributions involving special functions such as \cite{ba23,bkp21}; many further examples, as well as theory and applications can be found in the book \cite{s03}. It was in fact already known (see \cite[Section 2.4]{gaunt22}) that the distribution of the product $Z$ is infinitely divisible in the case $\mu_X=\mu_Y=0$. This is because $Z$ is variance-gamma distributed in this case (see \cite{gaunt vg,gauntprod}), and the variance-gamma distribution is infinitely divisible (see \cite{vg review}). Our contribution is to extend this result to the full range of parameter values.  




\section{Results and proofs}

\subsection{Infinite series representations of the density}

We begin by stating an exact formula for the PDF of the sum $S_n=\sum_{i=1}^nZ_i$, which is valid for the full range of parameter values. The formula is expressed in terms of the confluent hypergeometric function of the second kind, which is defined in Appendix \ref{appa}.
We denote the sign function by $\mathrm{sgn}(x)$, which is given by $\mathrm{sgn}(x)=1$ for $x>0$, $\mathrm{sgn}(0)=0$, $\mathrm{sgn}(x)=-1$ for $x<0$. Finally, we let $a_{j,k}(x)=k-j$ if $x\geq0$ and $a_{j,k}(x)=j$ if $x<0$.

\begin{theorem}\label{thm1}
For $x\in\mathbb{R}$,
\begin{align}
f_{S_n}(x)&=\frac{(1-\rho^2)^{n/2-1}}{2^{n-1}\sigma_X\sigma_Y}\exp\bigg(-\frac{n}{2(1-\rho^2)}\bigg(\frac{\mu_X^2}{\sigma_X^2}+\frac{\mu_Y^2}{\sigma_Y^2}-\frac{2\rho\mu_X\mu_Y}{\sigma_X\sigma_Y}\bigg)-\frac{|x|}{\sigma_X\sigma_Y(1+\rho\,\mathrm{sgn}(x))}\bigg)\nonumber\\
&\quad\times\sum_{k=0}^\infty\sum_{j=0}^k\frac{(n/8)^k}{k!\Gamma(n/2+a_{j,k}(x))}\binom{k}{j} \bigg(\frac{1+\rho}{1-\rho}\bigg)^j\bigg(\frac{\mu_X}{\sigma_X}-\frac{\mu_Y}{\sigma_Y}\bigg)^{2j} \bigg(\frac{1-\rho}{1+\rho}\bigg)^{k-j}\nonumber\\
&\quad\times \bigg(\frac{\mu_X}{\sigma_X}+\frac{\mu_Y}{\sigma_Y}\bigg)^{2k-2j} U\bigg(1-\frac{n}{2}-a_{j,k}(x),2-n-k,\frac{2|x|}{\sigma_X\sigma_Y(1-\rho^2)}\bigg). \label{for1} 
\end{align}   
\end{theorem}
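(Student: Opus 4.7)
The natural approach is through the moment generating function (MGF). A direct Gaussian integral computation (parametrising $X=\mu_X+\sigma_X\xi$, $Y=\mu_Y+\sigma_Y\eta$ with $(\xi,\eta)$ standard bivariate normal of correlation $\rho$, completing the square in the bivariate exponent, and evaluating) gives
\[
M_Z(t)=\frac{1}{\sqrt{p(t)}}\exp\bigg(t\mu_X\mu_Y+\frac{t^2(\mu_Y^2\sigma_X^2+2\tilde\rho\,\mu_X\mu_Y\sigma_X\sigma_Y+\mu_X^2\sigma_Y^2)}{2p(t)}\bigg),
\]
where $\tilde\rho=\rho+(1-\rho^2)\sigma_X\sigma_Yt$ and, crucially,
\[
p(t)=1-2\rho\sigma_X\sigma_Yt-(1-\rho^2)\sigma_X^2\sigma_Y^2t^2=\bigl(1-(1+\rho)\sigma_X\sigma_Yt\bigr)\bigl(1+(1-\rho)\sigma_X\sigma_Yt\bigr).
\]
Setting $A=\mu_X/\sigma_X$ and $B=\mu_Y/\sigma_Y$ and exploiting the identities $A^2-2\rho AB+B^2=\tfrac{1-\rho}{2}(A+B)^2+\tfrac{1+\rho}{2}(A-B)^2$ and $2AB=\tfrac12[(A+B)^2-(A-B)^2]$, a partial-fraction simplification collapses the exponent to the clean form
\[
\log M_Z(t)+\tfrac12\log p(t)=-\frac{A^2+B^2-2\rho AB}{2(1-\rho^2)}+\frac{(A+B)^2/(4(1+\rho))}{1-(1+\rho)\sigma_X\sigma_Yt}+\frac{(A-B)^2/(4(1-\rho))}{1+(1-\rho)\sigma_X\sigma_Yt}.
\]
Raising to the $n$-th power expresses $M_{S_n}(t)$ as a constant times $p(t)^{-n/2}$ times a product of two exponentials of simple rational functions of $t$.

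Next I would Taylor-expand each of the two exponentials. Each term in the resulting double series carries the factor $(1-(1+\rho)\sigma_X\sigma_Yt)^{-m-n/2}(1+(1-\rho)\sigma_X\sigma_Yt)^{-l-n/2}$, which is precisely the MGF of $G_1-G_2$ for independent $G_1\sim\Gamma(m+n/2,(1+\rho)\sigma_X\sigma_Y)$ and $G_2\sim\Gamma(l+n/2,(1-\rho)\sigma_X\sigma_Y)$. I would then compute the density of $G_1-G_2$ by convolving the two gamma densities, and substitute $w=|x|u$ in the resulting one-dimensional integral so that it matches the integral representation $U(a,b,z)=\Gamma(a)^{-1}\int_0^\infty e^{-zw}w^{a-1}(1+w)^{b-a-1}\,dw$. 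This yields a closed-form PDF in which the exponential $e^{-|x|/(\sigma_X\sigma_Y(1+\rho\,\mathrm{sgn}(x)))}$, the gamma factor ($\Gamma(m+n/2)$ for $x>0$ and $\Gamma(l+n/2)$ for $x<0$), and the first argument of the accompanying $U$-function ($l+n/2$ for $x>0$ and $m+n/2$ for $x<0$) all have a $\mathrm{sgn}(x)$-dependence that is precisely encoded by $a_{j,k}(x)$ after the re-indexing $k=m+l$, $j=l$. Interchanging summation and inverse Laplace transform is justified by the factorial decay of the coefficients.

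The last step is to apply Kummer's transformation $U(a,b,z)=z^{1-b}U(a-b+1,2-b,z)$, which converts the $U$-factor into $U(1-n/2-a_{j,k}(x),2-n-k,z)$ with $z=2|x|/(\sigma_X\sigma_Y(1-\rho^2))$; the extracted $z^{k+n-1}$ absorbs into the prefactor. Collecting the remaining powers of $1\pm\rho$, $\sigma_X\sigma_Y$ and $2$ — using $(n/4)^k/2^{k+n-1}=(n/8)^k/2^{n-1}$ and $(1+\rho)^{n/2-1}(1-\rho)^{n/2-1}=(1-\rho^2)^{n/2-1}$ — reproduces the prefactor $(1-\rho^2)^{n/2-1}/(2^{n-1}\sigma_X\sigma_Y)$, the coefficient $(n/8)^k\binom{k}{j}/k!$, the rational factor $((1+\rho)/(1-\rho))^j((1-\rho)/(1+\rho))^{k-j}$, and the mean-dependent weights $(A-B)^{2j}(A+B)^{2(k-j)}$. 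The main obstacle I anticipate is purely algebraic: securing the clean partial-fraction collapse in the second display, and then carefully tracking all powers of $1\pm\rho$, $\sigma_X\sigma_Y$ and $2$ through the series expansion, the convolution integral, and Kummer's transformation — no single deep analytic step is required.
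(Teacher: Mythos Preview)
Your approach is correct and follows essentially the same architecture as the paper's proof: write the characteristic function of $S_n$, perform exactly the partial-fraction decomposition you describe, expand the two exponentials into a double series, and invert term by term. The paper carries out the inversion by quoting a tabulated Fourier integral (its formula~(A.20)) that already returns $U(1-\delta,2-\rho-\sigma,\cdot)$ in the desired form, so no Kummer transformation is needed there; your route instead recognises each term as the MGF of a difference of independent gammas, computes the convolution via the Laplace-integral representation of $U$, and then applies Kummer's transformation $U(a,b,z)=z^{1-b}U(a-b+1,2-b,z)$ to reach the same arguments. Your version is slightly more self-contained (it effectively rederives the table integral rather than citing it) and makes the probabilistic structure of each summand explicit, at the cost of one extra algebraic step; the paper's version is shorter because the cited integral already has Kummer baked in. Either way the re-indexing $k=m+l$, $j=l$ and the bookkeeping of powers of $1\pm\rho$, $\sigma_X\sigma_Y$ and $2$ proceed identically.
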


Before proving Theorem \ref{thm1}, we make some observations and remarks and note some simplifications of the PDF (\ref{thm1}) for certain parameter values.

\begin{corollary}
The PDF $f_{S_n}(x)$ is bounded for all $x\in\mathbb{R}$ if and only if $n\geq2$.    
\end{corollary}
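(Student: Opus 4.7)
My plan is to split the equivalence into two directions, handling sufficiency $(n\geq 2)$ by a soft convolution argument and necessity $(n=1)$ by direct inspection of \eqref{pdf}. The driving observation is that the singular behavior of $f_Z$ at the origin is a mild logarithmic one, which places $f_Z$ in $L^2(\mathbb{R})$; and two such functions convolve to a bounded function.

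For the sufficiency I would first verify that $f_Z\in L^2(\mathbb{R})$. Near the origin, the only summand in \eqref{pdf} that blows up is the $k=j=0$ term, where $K_0(|x|/[\sigma_X\sigma_Y(1-\rho^2)])=O(\log(1/|x|))$; every other term carries a factor $|x|^k$ (obtained from $x^{2k-j}|x|^{j-k}$ for either sign of $x$) which dominates the possible $|x|^{-|j-k|}$ growth of $K_{j-k}$ since $k-|j-k|\geq 0$. Hence $f_Z$ is locally bounded by a multiple of $\log(1/|x|)$, which lies in $L^2_{\mathrm{loc}}$. At infinity the asymptotic $K_\nu(z)\sim\sqrt{\pi/(2z)}\,e^{-z}$ combined with the exponential prefactor in \eqref{pdf} gives exponential decay of $f_Z$, and the series in \eqref{pdf} can be truncated at large $|x|$ since successive terms are controlled by the small-argument growth of the remaining Bessel factors. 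So $f_Z\in L^2(\mathbb{R})$. By Cauchy--Schwarz,
\[
|f_{S_2}(x)|=\bigg|\int_{\mathbb{R}} f_Z(y)\,f_Z(x-y)\,dy\bigg|\leq \|f_Z\|_2^2<\infty,
\]
for every $x\in\mathbb{R}$, so $f_{S_2}$ is bounded. For $n\geq 3$ I would write $f_{S_n}=f_{S_2}*f_{S_{n-2}}$ and use $\|f_{S_n}\|_\infty\leq \|f_{S_2}\|_\infty\,\|f_{S_{n-2}}\|_1=\|f_{S_2}\|_\infty$ since $f_{S_{n-2}}$ is a probability density.

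For the necessity I would use the explicit formula \eqref{pdf} with $n=1$. The $k=j=0$ summand equals a nonzero constant times $K_0(|x|/[\sigma_X\sigma_Y(1-\rho^2)])$, which diverges like $-\log|x|$ as $x\to 0$. As noted above, the remaining ($k\geq 1$) summands either vanish or have finite limit as $x\to 0$ because the prefactor $|x|^k$ absorbs the worst-case $|x|^{-|j-k|}$ growth of $K_{j-k}$. Therefore $f_Z(x)\to+\infty$ as $x\to 0$ and $f_{S_1}=f_Z$ is unbounded, completing the converse. The main technical point to nail down carefully is the bound $|f_Z(x)|\leq C(1+\log(1/|x|))e^{-c|x|}$ uniformly in $x$ via term-by-term control of the double series, but once that is in hand the rest of the argument is entirely soft.
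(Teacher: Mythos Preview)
Your strategy is correct and genuinely different from the paper's. The paper does not use any convolution or $L^2$ argument; instead it reads the boundedness directly off the explicit series \eqref{for1} for $f_{S_n}$ together with the limiting forms \eqref{00} and \eqref{000} of the Tricomi function $U$: for $n\geq 2$ the relevant $U(1-\tfrac{n}{2}-a_{j,k}(x),2-n-k,\cdot)$ falls under \eqref{000} and stays bounded at $0$, while for $n=1$ the $k=0$ term invokes \eqref{00} and produces the logarithmic blow-up. So the paper's proof is essentially a one-line appeal to the asymptotics of $U$ once Theorem~\ref{thm1} is in hand.

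Your route has the merit of being decoupled from Theorem~\ref{thm1}: only information about the single-product density $f_Z$ is needed, and the passage to $n\geq 2$ is the soft inequality $\|f*g\|_\infty\le\|f\|_2\|g\|_2$ followed by $\|f*g\|_\infty\le\|f\|_\infty\|g\|_1$. The necessity argument you give (isolating the $k=j=0$ term in \eqref{pdf}) is the same idea the paper uses. The one place your write-up is still heuristic is the uniform control of the double series needed for $f_Z\in L^2$ and for showing the $k\ge1$ remainder stays bounded at $0$; you flag this yourself. A cleaner way to close the $L^2$ step, avoiding termwise estimates entirely, is to use the characteristic function \eqref{charz}: since the exponent there tends to a finite real limit as $|t|\to\infty$ and the prefactor satisfies $|\varphi_Z(t)|\sim c|t|^{-1}$, one has $\varphi_Z\in L^2(\mathbb{R})$ and hence $f_Z\in L^2(\mathbb{R})$ by Plancherel. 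With that substitution your argument is complete.
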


\begin{proof}
It was shown in \cite{gz23} that the PDF of $Z=_d S_1$ (here $=_d$ denotes equality in distribution) is unimodal with a logaritmic singularity at the mode $x=0$ (this can also be inferred from formula (\ref{for1}) by using the limiting forms (\ref{00}) and (\ref{000})). For $n\geq2$, the PDF (\ref{for1}) of $S_n$ is, however, bounded for all $x\in\mathbb{R}$, as can be seen from applying the limiting form to (\ref{000}) to determine the behaviour as $x\rightarrow0$, and due to the fact that the function $U(a,b,x)$ only has (possible) singularities in the limits $x\rightarrow0$ and $x\rightarrow\pm\infty$, and the PDF is clearly bounded in the limits $x\rightarrow\pm\infty$ (and the asymptotic behaviour of the PDF in the limits $x\rightarrow\pm\infty$ is in fact given in Theorem 3.1 of \cite{gz24}, which confirms that $f_{S_n}(x)\rightarrow0$ as $x\rightarrow\pm\infty$).
\end{proof}

\begin{remark}\label{remscale} Since $\overline{Z}_n=S_n/n$, we have by a simple rescaling that $f_{\overline{Z}_n}(x)=nf_{S_n}(nx)$. Applying this formula to (\ref{for1}) yields the following formula for the PDF of the sample mean $\overline{Z}_n$:
\begin{align}
f_{\overline{Z}_n}(x)&=\frac{n(1-\rho^2)^{n/2-1}}{2^{n-1}\sigma_X\sigma_Y}\exp\bigg(-\frac{n}{2(1-\rho^2)}\bigg(\frac{\mu_X^2}{\sigma_X^2}+\frac{\mu_Y^2}{\sigma_Y^2}-\frac{2\rho\mu_X\mu_Y}{\sigma_X\sigma_Y}\bigg)-\frac{n|x|}{\sigma_X\sigma_Y(1+\rho\,\mathrm{sgn}(x))}\bigg)\nonumber\\
&\quad\times\sum_{k=0}^\infty\sum_{j=0}^k\frac{(n/8)^k}{k!\Gamma(n/2+a_{j,k}(x))}\binom{k}{j} \bigg(\frac{1+\rho}{1-\rho}\bigg)^j\bigg(\frac{\mu_X}{\sigma_X}-\frac{\mu_Y}{\sigma_Y}\bigg)^{2j} \bigg(\frac{1-\rho}{1+\rho}\bigg)^{k-j}\nonumber\\
&\quad\times \bigg(\frac{\mu_X}{\sigma_X}+\frac{\mu_Y}{\sigma_Y}\bigg)^{2k-2j} U\bigg(1-\frac{n}{2}-a_{j,k}(x),2-n-k,\frac{2n|x|}{\sigma_X\sigma_Y(1-\rho^2)}\bigg),\quad x\in\mathbb{R}. \nonumber
\end{align} 
We can similarly deduce formulas for the PDF of the sample mean $\overline{Z}_n$ from all our subsequent formulas for the PDF of the sum $S_n$.
\end{remark}

\begin{remark}Setting $n=1$ in (\ref{for1}) of course yields an exact formula for the PDF of the product $Z$. The resulting formula takes on a different form to the formula (\ref{pdf}) of \cite{cui}, expressed in terms of a double infinite series involving the confluent hypergeometric function of the second kind rather than a double infinite series involving the modified Bessel function of the second kind. This difference arises because the derivations are different.  Formula (\ref{pdf}) of \cite{cui} was obtained from a calculation using the standard general formula for the PDF of the product $Z$ of two random variables $X$ and $Y$, as given by $f_Z(x)=\int_{-\infty}^\infty f_{X,Y}(t,x/t) |t|^{-1}\,\mathrm{d}t$, where in their case $f_{X,Y}$ is the PDF of the bivariate normal random vector $(X,Y)$. On the other hand, we use a characteristic function argument, which is well-suited to sums of independent random variables.

Both formulas (\ref{pdf}) and (\ref{for1}) (when specialised to $n=1$) are of a similar level of complexity. One notable difference is that, in contrast to formula (\ref{pdf}) of \cite{cui}, the summands in the infinite series of formula (\ref{for1}) are always non-negative for all values of $x$ and all parameter values (due to the positivity of the confluent hypergeometric function of the second kind; see (\ref{positive})). Another way in which the formulas differ is that they reduce to a single infinite series for different parameter values, as shown in the next remark.   
\end{remark}

\begin{remark}\label{remsimple} 1. The double infinite series (\ref{for1}) reduces to a single infinite series for the following parameter values.

Suppose that $\mu_X/\sigma_X=\mu_Y/\sigma_Y$. Then, for $x\in\mathbb{R}$,
\begin{align*}
f_{S_n}(x)&=\frac{(1-\rho^2)^{n/2-1}}{2^{n-1}\sigma_X\sigma_Y}\exp\bigg(-\frac{n\mu_X^2}{\sigma_X^2(1+\rho)}-\frac{|x|}{\sigma_X\sigma_Y(1+\rho\,\mathrm{sgn}(x))}\bigg)\sum_{k=0}^\infty\frac{(n/2)^k}{k!\Gamma(n/2+a_{0,k}(x))}\\
&\quad \times\bigg(\frac{1-\rho}{1+\rho}\bigg)^{k} \bigg(\frac{\mu_X}{\sigma_X}\bigg)^{2k} U\bigg(1-\frac{n}{2}-a_{0,k}(x),2-n-k,\frac{2|x|}{\sigma_X\sigma_Y(1-\rho^2)}\bigg). 
\end{align*}
Now, suppose that $\mu_X/\sigma_X=-\mu_Y/\sigma_Y$. Then, for $x\in\mathbb{R}$,
\begin{align*}
f_{S_n}(x)&=\frac{(1-\rho^2)^{n/2-1}}{2^{n-1}\sigma_X\sigma_Y}\exp\bigg(-\frac{n\mu_X^2}{\sigma_X^2(1-\rho)}-\frac{|x|}{\sigma_X\sigma_Y(1+\rho\,\mathrm{sgn}(x))}\bigg)\sum_{k=0}^\infty\frac{(n/2)^k}{k!\Gamma(n/2+a_{k,k}(x))} \\
&\quad\times\bigg(\frac{1+\rho}{1-\rho}\bigg)^{k} \bigg(\frac{\mu_X}{\sigma_X}\bigg)^{2k} U\bigg(1-\frac{n}{2}-a_{k,k}(x),2-n-k,\frac{2|x|}{\sigma_X\sigma_Y(1-\rho^2)}\bigg). 
\end{align*}
We therefore see that formula (\ref{for1}) reduces to a single infinite series in the cases $\mu_X/\sigma_X=\mu_Y/\sigma_Y$ and $\mu_X/\sigma_X=-\mu_Y/\sigma_Y$. This is in contrast to the formula (\ref{pdf}) of \cite{cui} that reduces to a single infinite series if $\mu_X/\sigma_X=\rho\mu_Y/\sigma_Y$ or $\mu_Y/\sigma_Y=\rho\mu_X/\sigma_X$ (see (\ref{red1})).

\vspace{2mm}

\noindent 2. Just like formula (\ref{pdf}), the double infinite series (\ref{for1}) reduces to a single term in the case $\mu_X=\mu_Y=0$: 
\begin{align*}
f_{S_n}(x)=\frac{(1-\rho^2)^{n/2-1}}{2^{n-1}s\Gamma(n/2)}\exp\bigg(-\frac{|x|}{s(1+\rho\,\mathrm{sgn}(x))}\bigg)U\bigg(1-\frac{n}{2},2-n,\frac{2|x|}{s(1-\rho^2)}\bigg), \quad x\in\mathbb{R},
\end{align*}
where $s=\sigma_X\sigma_Y$. On applying the relation (\ref{uk}) followed by the identity (\ref{par}) we see that this formula can be expressed in terms of the modified Bessel function of the second kind: 
\begin{align*}
f_{S_n}(x)=\frac{(n/2)^{(n-1)/2}|x|^{(n-1)/2}}{s^{(n+1)/2}\sqrt{\pi(1-\rho^2)}\Gamma(n/2)}\exp\bigg(\frac{\rho  x}{s(1-\rho^2)} \bigg)K_{\frac{n-1}{2}}\bigg(\frac{ |x|}{s(1-\rho^2)}\bigg), \quad x\in\mathbb{R},  
\end{align*}
which, on recalling that  $f_{\overline{Z}_n}(x)=nf_{S_n}(nx)$, can be seen to be in agreement with formula (\ref{bar}) for the PDF of $\overline{Z}_n$ in the case $\mu_X=\mu_Y=0$.
\end{remark}

The following theorem gives an exact formula for the PDF of the sum $S_n$ in the special case that $\rho=0$ and one of the means is equal to zero; without loss of generality we set $\mu_Y=0$. Since the formula is expressed as a single infinite series involving the modified Bessel function of the second kind, it cannot be easily recovered from our general formula (\ref{for1}), so we provide a separate proof.

\begin{theorem}\label{rho0}Suppose $\rho=0$, $\mu_Y=0$, and $\mu_X\in\mathbb{R}$.
 Then, for $x\in\mathbb{R}$,
\begin{align}\label{simple2}
f_{S_n}(x)=\frac{2^{(1-n)/2}}{(\sigma_X\sigma_Y)^{(n+1)/2}\sqrt{\pi}}\exp\bigg(-\frac{n\mu_X^2}{2\sigma_X^2}\bigg)\sum_{k=0}^{\infty}\frac{(n\mu_X^2/4)^k|x|^{(n-1)/2+k}}{k!\Gamma(n/2+k)\sigma_X^{3k}\sigma_Y^k}K_{\frac{n-1}{2}+k}\bigg(\frac{|x|}{\sigma_X\sigma_Y}\bigg).    
\end{align}
\end{theorem}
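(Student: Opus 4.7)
The plan is to exploit the independence of $X$ and $Y$ forced by $\rho=0$, together with $\mu_Y=0$, via a conditioning argument. Writing $Z_i = X_i Y_i$ with $X_i \sim N(\mu_X,\sigma_X^2)$ and $Y_i \sim N(0,\sigma_Y^2)$ independent of $X_i$, and setting $W = \sum_{i=1}^n X_i^2$, I have that conditional on $(X_1,\ldots,X_n)$,
\begin{equation*}
S_n = \sum_{i=1}^n X_i Y_i \sim N\big(0,\, \sigma_Y^2 W\big),
\end{equation*}
since any linear combination of the independent centered normals $Y_i$ is again centered normal. Unconditioning therefore gives
\begin{equation*}
f_{S_n}(x) = \int_0^\infty \frac{1}{\sqrt{2\pi\sigma_Y^2 w}}\, e^{-x^2/(2\sigma_Y^2 w)}\, f_W(w)\,\mathrm{d}w.
\end{equation*}

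To identify $f_W$, I use the fact that $W/\sigma_X^2$ has a noncentral chi-squared distribution with $n$ degrees of freedom and noncentrality parameter $\lambda = n\mu_X^2/\sigma_X^2$. The standard Poisson-mixture representation expands $f_W$ as a series of scaled central chi-squared densities indexed by $k\ge0$ with weights $e^{-\lambda/2}(\lambda/2)^k/k!$. Substituting this series into the integral above and interchanging sum and integral (justified by positivity and Tonelli) reduces each summand to an integral of the form
\begin{equation*}
\int_0^\infty w^{\nu-1}\, e^{-aw - b/w}\,\mathrm{d}w, \qquad \nu = \tfrac{n-1}{2}+k,\ \ a = \tfrac{1}{2\sigma_X^2},\ \ b = \tfrac{x^2}{2\sigma_Y^2}.
\end{equation*}

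Applying the classical identity $\int_0^\infty t^{\nu-1}e^{-at-b/t}\,\mathrm{d}t = 2(b/a)^{\nu/2} K_\nu(2\sqrt{ab})$ produces the Bessel factor $K_{(n-1)/2+k}(|x|/(\sigma_X\sigma_Y))$ together with the polynomial factor $(|x|\sigma_X/\sigma_Y)^{(n-1)/2+k}$. The remaining step is purely algebraic: combining the two sources of $2^{-k}$, one from $(\lambda/2)^k$ and one from the $2^{-n/2-k}$ appearing in the chi-squared mixture, into a single factor $4^{-k}$ which is absorbed into $(n\mu_X^2/4)^k$, and collecting all $\sigma_X$ and $\sigma_Y$ powers to yield the stated formula. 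I expect no genuine analytic obstacle; the bookkeeping of the various powers of $2$, $\sigma_X$, and $\sigma_Y$ is the only place where a slip is easy, the substantive content being the conditioning step together with the Poisson-mixture structure of the noncentral chi-squared density.
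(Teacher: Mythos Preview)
Your argument is correct and takes a genuinely different route from the paper. The paper proceeds analytically via characteristic functions: starting from $\varphi_{S_n}(t)=(1+t^2)^{-n/2}\exp\big(-n\mu_X^2 t^2/(2(1+t^2))\big)$ (with $\sigma_X=\sigma_Y=1$), it rewrites the exponent as $-n\mu_X^2/2 + n\mu_X^2/(2(1+t^2))$, expands the second exponential as a power series, and then applies Fourier inversion term by term, evaluating $\int_{-\infty}^\infty e^{-ixt}(1+t^2)^{-n/2-k}\,\mathrm{d}t$ via the known formula that gives $K_{(n-1)/2+k}(|x|)$. Your approach is probabilistic instead: you recognise $S_n$ as a normal scale mixture with mixing variable $W=\sum X_i^2$, exploit the Poisson-mixture structure of the noncentral chi-squared law of $W/\sigma_X^2$, and then evaluate the resulting $\int_0^\infty w^{\nu-1}e^{-aw-b/w}\,\mathrm{d}w$ by the standard Bessel identity. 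Both arguments land on the same series; yours is arguably more transparent about \emph{why} the Bessel functions appear (they come from the Gaussian--gamma convolution inherent in the scale mixture), while the paper's approach has the advantage of being a direct specialisation of the characteristic-function machinery already set up for the general Theorem~\ref{thm1}. Your Tonelli justification for the interchange is appropriate, and the bookkeeping you outline checks out.
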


\begin{remark} Theorem \ref{rho0} extends in a natural way the PDF $f_Z(x)$ described in \eqref{simple} to the PDF $f_{S_n}(x)$. On calculating $(2j)!=\Gamma(2j+1)$ using the gamma duplication formula $\Gamma(2x)=\pi^{-1/2}2^{2x-1}\Gamma(x)\Gamma(x
+1/2)$ see {\rm (\cite[Section 5.5(iii)]{olver})} we see that formula \eqref{simple2} with $n=1$ indeed reduces to \eqref{simple}.
\end{remark}

We now set about proving Theorems \ref{thm1} and \ref{rho0}. For ease of notation, we shall prove our results for the case $\sigma_X=\sigma_Y=1$, with the results for the general case $\sigma_X,\sigma_Y>0$ following from the basic relation that $Z=XY=_d \sigma_X\sigma_Y UV$, where $(U,V)$ is a bivariate normal random vector with mean vector $(\mu_X/\sigma_X,\mu_Y/\sigma_Y)$, variances $(1,1)$ and correlation coefficient $\rho$.

Our proofs will employ characteristic function arguments. We will make use of the characteristic function of $Z$, which we denote by $\varphi_Z(t)=\mathbb{E}[\mathrm{e}^{\mathrm{i}tZ}]=\int_{-\infty}^\infty\mathrm{e}^{\mathrm{i}tx} f_Z(x)\,\mathrm{d}x$. By \cite[equation (10)]{craig}, we have, for $t\in\mathbb{R}$, 
 \begin{align}\label{charz}
      \varphi_Z(t) = \frac1{([1-(1+\rho)\mathrm{i}t][1+(1-\rho)\mathrm{i}t])^{1/2}} 
			             \exp\bigg(\frac{-(\mu_X^2+\mu_Y^2-2\rho\mu_X\mu_Y)t^2+2\mu_X\mu_Y\mathrm{i}t}
									 {2[1-(1+\rho)\mathrm{i}t][1+(1-\rho)\mathrm{i}t]}\bigg).
   \end{align}
We are now in a position to prove Theorem \ref{thm1}.   

\vspace{2mm}   

\noindent{\emph{Proof of Theorem \ref{thm1}.}} 
Let $\varphi_{S_n}(t)=\mathbb{E}[\mathrm{e}^{\mathrm{i}tS_n}]$ denote the characteristic function of $S_n$. 
Then
   \begin{align*}
      \varphi_{S_n}(t) = \frac1{([1-(1+\rho)\mathrm{i}t][1+(1-\rho)\mathrm{i}t])^{n/2}} 
			             \exp\bigg(\frac{-n(\mu_X^2+\mu_Y^2-2\rho\mu_X\mu_Y)t^2+2n\mu_X\mu_Y\mathrm{i}t}
									 {2[1-(1+\rho)\mathrm{i}t][1+(1-\rho)\mathrm{i}t]}\bigg),
   \end{align*}
which follows from (\ref{charz}) and the standard formula for the characteristic function of sums 
of independent random variables.
We note the partial fraction decomposition
\begin{align*}
&\frac{-(\mu_X^2+\mu_Y^2-2\rho\mu_X\mu_Y)t^2+2\mu_X\mu_Y\mathrm{i}t}{2[1-(1+\rho)\mathrm{i}t][1+(1-\rho)\mathrm{i}t]}\\
&=-\frac{(\mu_X^2+\mu_Y^2-2\rho\mu_X\mu_Y)}{2(1-\rho^2)}+\frac{1}{4(1-\rho^2)}\bigg(\frac{(1+\rho)(\mu_X-\mu_Y)^2}{(1+(1-\rho)\mathrm{i}t)}+\frac{(1-\rho)(\mu_X+\mu_Y)^2}{(1-(1+\rho)\mathrm{i}t)}\bigg).
\end{align*}
Therefore we can write
\begin{align*}
\varphi_{S_n}(t)&=\frac{1}{([1-(1+\rho)\mathrm{i}t][1+(1-\rho)\mathrm{i}t])^{n/2}}\exp\bigg(-\frac{n(\mu_X^2+\mu_Y^2-2\rho\mu_X\mu_Y)}{2(1-\rho^2)}\bigg)\\
&\quad\times\exp\bigg(\frac{n}{4(1-\rho^2)}\bigg(\frac{(1+\rho)(\mu_X-\mu_Y)^2}{(1+(1-\rho)\mathrm{i}t)}+\frac{(1-\rho)(\mu_X+\mu_Y)^2}{(1-(1+\rho)\mathrm{i}t)}\bigg)\bigg)\\
&=\frac{1}{([1-(1+\rho)\mathrm{i}t][1+(1-\rho)\mathrm{i}t])^{n/2}}\exp\bigg(-\frac{n(\mu_X^2+\mu_Y^2-2\rho\mu_X\mu_Y)}{2(1-\rho^2)}\bigg)\\
&\quad\times\sum_{k=0}^\infty\frac{(n/4)^k}{k!(1-\rho^2)^k}\bigg(\frac{(1+\rho)(\mu_X-\mu_Y)^2}{(1+(1-\rho)\mathrm{i}t)}+\frac{(1-\rho)(\mu_X+\mu_Y)^2}{(1-(1+\rho)\mathrm{i}t)}\bigg)^k\\
&=\exp\bigg(-\frac{n(\mu_X^2+\mu_Y^2-2\rho\mu_X\mu_Y)}{2(1-\rho^2)}\bigg)\sum_{k=0}^\infty\sum_{j=0}^k\frac{(n/4)^k}{k!(1-\rho^2)^k}\binom{k}{j}(1+\rho)^j(\mu_X-\mu_Y)^{2j}\\
&\quad\times (1-\rho)^{k-j}(\mu_X+\mu_Y)^{2k-2j}(1+(1-\rho)\mathrm{i}t)^{-n/2-j}(1-(1+\rho)\mathrm{i}t)^{-n/2-k+j}.
\end{align*}
Hence, by the inversion theorem, the PDF of $S_n$ can be expressed as
\begin{align*}
f_{S_n}(x)&=\frac{1}{2\pi} \exp\bigg(-\frac{n(\mu_X^2+\mu_Y^2-2\rho\mu_X\mu_Y)}{2(1-\rho^2)}\bigg)\sum_{k=0}^\infty\sum_{j=0}^k\frac{(n/4)^k}{k!(1-\rho^2)^k}\binom{k}{j}(1+\rho)^j(\mu_X-\mu_Y)^{2j}\\
&\quad\times (1-\rho)^{k-j}(\mu_X+\mu_Y)^{2k-2j}\int_{-\infty}^\infty\mathrm{e}^{-\mathrm{i}xt}(1+(1-\rho)\mathrm{i}t)^{-n/2-j}(1-(1+\rho)\mathrm{i}t)^{-n/2-k+j}\,\mathrm{d}t\\
&=\frac{1}{2\pi} \exp\bigg(-\frac{n(\mu_X^2+\mu_Y^2-2\rho\mu_X\mu_Y)}{2(1-\rho^2)}\bigg)\sum_{k=0}^\infty\sum_{j=0}^k\frac{(n/4)^k}{k!(1-\rho^2)^{n/2+k}}\binom{k}{j}\\
&\quad\times\bigg(\frac{1+\rho}{1-\rho}\bigg)^j(\mu_X-\mu_Y)^{2j}\bigg(\frac{1-\rho}{1+\rho}\bigg)^{k-j}(\mu_X+\mu_Y)^{2k-2j}\\
&\quad\times \int_{-\infty}^\infty\mathrm{e}^{-\mathrm{i}xt}\bigg(\frac{1}{1-\rho}+\mathrm{i}t\bigg)^{-n/2-j}\bigg(\frac{1}{1+\rho}-\mathrm{i}t\bigg)^{-n/2-k+j}\,\mathrm{d}t.
\end{align*}
Evaluating the integral using (\ref{int1}) gives that, for $x\geq0$,
\begin{align}
f_{S_n}(x)&=\frac{1}{2\pi} \exp\bigg(-\frac{n(\mu_X^2+\mu_Y^2-2\rho\mu_X\mu_Y)}{2(1-\rho^2)}\bigg)\sum_{k=0}^\infty\sum_{j=0}^k\frac{(n/4)^k}{k!(1-\rho^2)^{n/2+k}}\binom{k}{j}\bigg(\frac{1+\rho}{1-\rho}\bigg)^j\nonumber\\
&\quad\times (\mu_X-\mu_Y)^{2j}\bigg(\frac{1-\rho}{1+\rho}\bigg)^{k-j}(\mu_X+\mu_Y)^{2k-2j}\cdot \frac{2\pi}{\Gamma(n/2+k-j)}\bigg(\frac{2}{1-\rho^2}\bigg)^{1-n-k}\nonumber\\
&\quad\times \exp\bigg(-\frac{x}{1+\rho}\bigg) U\bigg(1-\frac{n}{2}-k+j,2-n-k,\frac{2x}{1-\rho^2}\bigg)\nonumber\\
&=\frac{(1-\rho^2)^{n/2-1}}{2^{n-1}}\exp\bigg(-\frac{n(\mu_X^2+\mu_Y^2-2\rho\mu_X\mu_Y)}{2(1-\rho^2)}-\frac{x}{1+\rho}\bigg)\nonumber\\
&\quad\times\sum_{k=0}^\infty\sum_{j=0}^k\frac{(n/8)^k}{k!\Gamma(n/2+k-j)}\binom{k}{j} \bigg(\frac{1+\rho}{1-\rho}\bigg)^j(\mu_X-\mu_Y)^{2j} \bigg(\frac{1-\rho}{1+\rho}\bigg)^{k-j}\nonumber\\
&\quad\times(\mu_X+\mu_Y)^{2k-2j} U\bigg(1-\frac{n}{2}-k+j,2-n-k,\frac{2x}{1-\rho^2}\bigg).\label{56}
\end{align}
Similarly, we can find a formula for $x<0$:
\begin{align}f_{S_n}(x)&=\frac{(1-\rho^2)^{n/2-1}}{2^{n-1}}\exp\bigg(-\frac{n(\mu_X^2+\mu_Y^2-2\rho\mu_X\mu_Y)}{2(1-\rho^2)}+\frac{x}{1-\rho}\bigg)\nonumber\\
&\quad\sum_{k=0}^\infty\sum_{j=0}^k\frac{(n/8)^k}{k!\Gamma(n/2+j)}\binom{k}{j} \bigg(\frac{1+\rho}{1-\rho}\bigg)^j(\mu_X-\mu_Y)^{2j} \bigg(\frac{1-\rho}{1+\rho}\bigg)^{k-j}\nonumber\\
&\quad\times(\mu_X+\mu_Y)^{2k-2j} U\bigg(1-\frac{n}{2}-j,2-n-k,\frac{2|x|}{1-\rho^2}\bigg).\label{78}
\end{align}
We now obtain the desired formula (\ref{for1}) from equations (\ref{56}) and (\ref{78}) in the general case $\sigma_X,\sigma_Y>0$ by a simple rescaling and then using the notation $\mathrm{sgn}(x)$ and $a_{j,k}(x)$ in order to write down a compact expression that holds for all $x\in\mathbb{R}$.
\qed

\vspace{2mm} 

\noindent \emph{Proof of Theorem \ref{rho0}.} In the case $\rho=\mu_Y=0$ and $\mu_X=\mu$, the characteristic function simplifies to
\begin{align*}
\varphi_{S_n}(t)&= \frac{1}{(1+t^2)^{n/2}}\exp\bigg(-\frac{n\mu^2t^2}{2(1+t^2)}\bigg) \\
&=\frac{1}{(1+t^2)^{n/2}}\exp\bigg(-\frac{n\mu^2}{2}\bigg)\exp\bigg(\frac{n\mu^2}{2(1+t^2)}\bigg)\\
&=\exp\bigg(-\frac{n\mu^2}{2}\bigg)\sum_{k=0}^\infty\frac{1}{k!}\bigg(\frac{n\mu^2}{2}\bigg)^k\frac{1}{(1+t^2)^{n/2+k}}.
\end{align*}
Hence, by the inversion theorem, the PDF of $S_n$ can be expressed as
\begin{align*}
f_{S_n}(x)&=\frac{1}{2\pi} \exp\bigg(-\frac{n\mu^2}{2}\bigg)\sum_{k=0}^\infty \frac{1}{k!}\bigg(\frac{n\mu^2}{2}\bigg)^k\int_{-\infty}^\infty  \frac{\mathrm{e}^{-\mathrm{i}xt}}{(1+t^2)^{n/2+k}}\,\mathrm{d}t \\
&=\frac{1}{2^{(n-1)/2}\sqrt{\pi}}\exp\bigg(-\frac{n\mu^2}{2}\bigg)\sum_{k=0}^{\infty}\frac{(n\mu^2/4)^k}{k!\Gamma(n/2+k)}|x|^{(n-1)/2+k}K_{\frac{n-1}{2}+k}(|x|),
\end{align*}
where we calculated the integral using (\ref{int11}).
\qed

\subsection{Integral representations of the density}

In the following theorem, we provide an alternative representation of the Neumann series of the first type built by confluent hypergeometric 
 functions of the second kind (see e.g.\ \cite{BJMP, GNW}) in \eqref{for1} by presenting the PDF by an integral representation. The formulas are expressed in terms of the modified Bessel function of the first kind, which is defined in Appendix \ref{appa}.

\begin{theorem}\label{thmint} 
1. Suppose $|\mu_X|/\sigma_X\not=|\mu_Y|/\sigma_Y$. Then, for $x>0$,
   \begin{align} \label{Watson2}
	    f_{S_n}(x) &= D_1(x) \int_0^\infty (t(t+1))^{(n-2)/4} \exp\bigg(-\frac{2x t}{\sigma_X\sigma_Y(1-\rho^2)}\bigg)  \notag \\ 
						 &\quad \times I_{\frac{n}2-1}\bigg(\bigg|\frac{\mu_X}{\sigma_X}-\frac{\mu_Y}{\sigma_Y}\bigg|\frac{\sqrt{nxt}}{(1-\rho)\sqrt{\sigma_X\sigma_Y}}\bigg) I_{\frac{n}2-1}\bigg(\bigg|\frac{\mu_X}{\sigma_X}+\frac{\mu_Y}{\sigma_Y}\bigg|\frac{\sqrt{nx(1+t)}}{(1+\rho)\sqrt{\sigma_X\sigma_Y}}\bigg)	\,{\rm d} t,
	 \end{align}	
where  
   \begin{align*}
	    D_1(x) &= \frac{1}{\sigma_X\sigma_Y(1-\rho^2)}\bigg(\frac{n}{4}\bigg)^{1-n/2}\bigg|\frac{\mu_X^2}{\sigma_X^2}-\frac{\mu_Y^2}{\sigma_Y^2}\bigg|^{1-n/2}\bigg(\frac{x}{\sigma_X\sigma_Y}\bigg)^{n/2} \\
					 &\quad \times \exp\bigg(-\frac{n}{2(1-\rho^2)}\bigg(\frac{\mu_X^2}{\sigma_X^2}+\frac{\mu_Y^2}{\sigma_Y^2}-\frac{2\rho\mu_X\mu_Y}{\sigma_X\sigma_Y}\bigg)
			      - \frac{x}{\sigma_X\sigma_Y(1+\rho)}\bigg)\,.
	 \end{align*}

\noindent 2. Suppose $\mu_X/\sigma_X=\mu_Y/\sigma_Y$. Then, for $x>0$,
   \begin{align}
	    f_{S_n}(x) &= D_2(x) \int_0^\infty (t^2(t+1))^{(n-2)/4} \exp\bigg(-\frac{2x t}{\sigma_X\sigma_Y(1-\rho^2)}\bigg)\nonumber\\
     &\quad\times I_{\frac{n}2-1}\bigg(\frac{2|\mu_X|}{\sigma_X}\frac{\sqrt{nx(1+t)}}{(1+\rho)\sqrt{\sigma_X\sigma_Y}}\bigg)	\,{\rm d} t,\label{222}
	 \end{align}	
where  
   \begin{align*}
	    D_2(x) &= \frac{n^{(2-n)/4}}{\sigma_X\sigma_Y(1-\rho)^{n/2}(1+\rho)\Gamma(n/2)}\bigg(\frac{|\mu_X|}{\sigma_X}\bigg)^{1-n/2}\bigg(\frac{x}{\sigma_X\sigma_Y}\bigg)^{(3n-2)/4} \\
     &\quad\times\exp\bigg(-\frac{n\mu_X^2}{\sigma_X^2(1+\rho)}
			      - \frac{x}{\sigma_X\sigma_Y(1+\rho)}\bigg)\,.
	 \end{align*}

\noindent 3. Suppose $\mu_X/\sigma_X=-\mu_Y/\sigma_Y$. Then, for $x>0$,
   \begin{align}
	    f_{S_n}(x) &= D_3(x) \int_0^\infty ((t+1)^2t)^{(n-2)/4} \exp\bigg(-\frac{2x t}{\sigma_X\sigma_Y(1-\rho^2)}\bigg) \nonumber\\
     &\quad\times I_{\frac{n}2-1}\bigg(\frac{2|\mu_X|}{\sigma_X}\frac{\sqrt{nxt}}{(1-\rho)\sqrt{\sigma_X\sigma_Y}}\bigg)	\,{\rm d} t,\label{333}
	 \end{align}	
where  
   \begin{align*}
	    D_3(x) &= \frac{n^{(2-n)/4}}{\sigma_X\sigma_Y(1+\rho)^{n/2}(1-\rho)\Gamma(n/2)}\bigg(\frac{|\mu_X|}{\sigma_X}\bigg)^{1-n/2}\bigg(\frac{x}{\sigma_X\sigma_Y}\bigg)^{(3n-2)/4} \\
					 &\quad \times \exp\bigg(-\frac{n\mu_X^2}{\sigma_X^2(1-\rho)}
			      - \frac{x}{\sigma_X\sigma_Y(1+\rho)}\bigg)\,.
	 \end{align*} 

\noindent 4. Formulas for the PDF $f_{S_n}(x)$ for $x<0$ follow from replacing $(x,\rho,\mu_Y)$ by $(-x,-\rho,-\mu_Y)$ in equations (\ref{Watson2}), (\ref{222}) and (\ref{333}).
\end{theorem}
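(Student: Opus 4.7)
The plan is to pick up where the proof of Theorem~\ref{thm1} left off: use the same partial-fraction/exponential factorization of $\varphi_{S_n}$, but rather than expanding the two ``resolvent exponentials'' as double power series, I interpret them as characteristic functions of scaled noncentral chi-squared variables. I take $\sigma_X=\sigma_Y=1$ and recover the general case by rescaling. From the computations in the proof of Theorem~\ref{thm1},
\[\varphi_{S_n}(t)=e^{-C}\,\psi_1(t)\,\psi_2(-t),\qquad C=\frac{n(\mu_X^2+\mu_Y^2-2\rho\mu_X\mu_Y)}{2(1-\rho^2)},\]
where
\[\psi_1(t)=(1-(1+\rho)it)^{-n/2}\exp\!\bigg(\frac{\tilde\lambda_1}{1-(1+\rho)it}\bigg),\qquad \tilde\lambda_1=\frac{n(\mu_X+\mu_Y)^2}{4(1+\rho)},\]
and $\psi_2$ is obtained by replacing $\rho\to-\rho$, $\mu_Y\to-\mu_Y$, so that $\tilde\lambda_2=n(\mu_X-\mu_Y)^2/(4(1-\rho))$. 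Comparing with the known characteristic function of a noncentral chi-squared distribution identifies $\psi_j(t)=e^{\tilde\lambda_j}\varphi_{W_j}(t)$, where $W_1=\tfrac{1+\rho}{2}Y_1$, $W_2=\tfrac{1-\rho}{2}Y_2$ and $Y_j\sim\chi_n^2(2\tilde\lambda_j)$. A short algebraic check using $(\mu_X+\mu_Y)^2(1-\rho)+(\mu_X-\mu_Y)^2(1+\rho)=2(\mu_X^2+\mu_Y^2-2\rho\mu_X\mu_Y)$ gives $\tilde\lambda_1+\tilde\lambda_2=C$, so the exponential prefactor cancels and we obtain $S_n\stackrel{d}{=}W_1-W_2$ with $W_1$ and $W_2$ independent.

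For $x>0$ the PDF of $S_n$ is then the standard convolution $f_{S_n}(x)=\int_0^\infty f_{W_1}(x+w)f_{W_2}(w)\,dw$. Inserting into each factor the Bessel representation of the noncentral chi-squared density,
\[f_{\chi_n^2(\delta)}(y)=\tfrac12 e^{-(y+\delta)/2}(y/\delta)^{(n-2)/4}I_{n/2-1}(\sqrt{\delta y}),\]
and making the substitution $w=xt$ (so $x+w=x(1+t)$ and $dw=x\,dt$), the two Bessel arguments collapse to $|\mu_X+\mu_Y|\sqrt{nx(1+t)}/(1+\rho)$ and $|\mu_X-\mu_Y|\sqrt{nxt}/(1-\rho)$, the $y/\delta$ power factors combine into $(t(1+t))^{(n-2)/4}$ times a pure power of $x$, and the exponential pieces collapse via
\[\frac{x(1+t)}{1+\rho}+\frac{xt}{1-\rho}=\frac{x}{1+\rho}+\frac{2xt}{1-\rho^2}\]
into the exponent appearing in $D_1(x)$ together with the $\exp(-2xt/(1-\rho^2))$ surviving inside the integral. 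Matching prefactors then yields~(\ref{Watson2}).

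The degenerate cases follow from the same argument. When $\mu_X/\sigma_X=\mu_Y/\sigma_Y$ we have $\tilde\lambda_2=0$, so $W_2$ is merely a scaled \emph{central} chi-squared (a scaled gamma variate) with density proportional to $w^{n/2-1}e^{-w/(1-\rho)}$; after $w=xt$ it contributes $t^{n/2-1}$ rather than $t^{(n-2)/4}I_{n/2-1}(\cdot)$, converting the polynomial prefactor into $(t^2(1+t))^{(n-2)/4}$ and leaving only the Bessel factor coming from $W_1$, which yields~(\ref{222}). The symmetric case $\mu_X/\sigma_X=-\mu_Y/\sigma_Y$ exchanges the roles of $W_1$ and $W_2$ and gives~(\ref{333}). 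Finally, part~4 is the distributional identity $-S_n\stackrel{d}{=}\tilde S_n$, where $\tilde S_n$ corresponds to the bivariate normal with $Y$ replaced by $-Y$ (so $(\rho,\mu_Y)\mapsto(-\rho,-\mu_Y)$); since $f_{S_n}(x)=f_{\tilde S_n}(-x)$ for $x<0$, the stated substitution rule is immediate.

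The main obstacle is the bookkeeping of prefactors in the convolution-substitution step: matching $D_1$, $D_2$ and $D_3$ exactly requires careful cancellation between the scale factors $(1\pm\rho)/2$, the noncentrality parameters $2\tilde\lambda_j$, and the gamma-function constants in the Bessel density. The conceptual content, namely $S_n\stackrel{d}{=}W_1-W_2$, comes essentially for free from the factorization already produced in Theorem~\ref{thm1}; once it is in hand, the three cases arise from a single convolution computation with one or two Bessel factors in the integrand.
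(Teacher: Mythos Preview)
Your argument is correct and takes a genuinely different route from the paper. The paper starts from the series representation \eqref{for1}, applies Kummer's transformation \eqref{KummerU} and the Laplace integral \eqref{LaplaceU} for $U$, interchanges summation and integration, recognises the inner sum as a ${}_2F_1$ polynomial, and finally invokes Watson's bilinear generating formula to collapse the double series into a product of Bessel functions $J_{n/2-1}$, which are then converted to $I_{n/2-1}$. In other words, the paper's proof is a purely analytic special-function manipulation that passes through Theorem~\ref{thm1}.

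Your approach, by contrast, is probabilistic: you read the partial-fraction factorisation of $\varphi_{S_n}$ not as a double power series but as the characteristic function of a difference $W_1-W_2$ of independent scaled noncentral chi-square variables, and then obtain the integral as a convolution using the standard Bessel form of the noncentral $\chi^2$ density. This bypasses Watson's formula entirely, makes the appearance of $I_{n/2-1}$ transparent (it is simply the Bessel factor in the noncentral $\chi^2$ density), and gives an intrinsic meaning to the integration variable $t$ (namely $t=W_2/x$). The degenerate cases $\mu_X/\sigma_X=\pm\mu_Y/\sigma_Y$ are likewise explained: one of the noncentrality parameters vanishes and the corresponding density becomes a gamma density, which is exactly why one Bessel factor disappears. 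The paper's approach has the minor advantage of staying within the special-function framework already set up for Theorem~\ref{thm1}; yours is shorter, more conceptual, and yields the side result $S_n\stackrel{d}{=}W_1-W_2$, which is of independent interest.
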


\begin{remark}1. We observe that the integrand in the integral representation of the PDF of $S_n$ takes a simpler form in the cases $\mu_X/\sigma_X=\mu_Y/\sigma_Y$ and $\mu_X/\sigma_X=-\mu_Y/\sigma_Y$, with the integrand involving just a single modified Bessel function of the first kind in these cases, whilst the integrand involves a product of two modified Bessel functions of the first kind in the general case. 

\vspace{2mm}

\noindent 2. We can recover the formulas given in parts 2 and 3 of Theorem \ref{thmint} from the general formula (\ref{Watson2}) by taking the limits $\mu_X/\sigma_X-\mu_Y/\sigma_Y\rightarrow0$ and $\mu_X/\sigma_X+\mu_Y/\sigma_Y\rightarrow0$, respectively, using the limiting form (\ref{ilim}).

\vspace{2mm}

\noindent 3. The integrands in the integral representations given in Theorem \ref{thmint} take an elementary form if $n\geq1$ is an odd integer, as can be seen from the elementary representation (\ref{elem}) of the modified Bessel function of the first kind.
\end{remark}

\begin{proof} As usual, we prove the result for the case $\sigma_X=\sigma_Y=1$.
We will derive formula (\ref{Watson2}), which holds provided $|\mu_X|\not=|\mu_Y|$ (in the case $\sigma_X=\sigma_Y=1$). A slight modification of our derivation yields the formulas given in parts 2 and 3 of the theorem under the conditions $\mu_X=\mu_Y$ and $\mu_X=-\mu_Y$, respectively
We omit the details.

Suppose $|\mu_X|\not=|\mu_Y|$. We first prove the result for $x>0$.
We begin by re-writing \eqref{for1} into a more convenient form for our purposes, {\it viz.} 
   \begin{align} \label{for17}
      f_{S_n}(x) &= C(x) \sum_{k=0}^\infty \dfrac{\left( n/8\right)^k}{k!} \bigg(\frac{1-\rho}{1+\rho} 
						    (\mu_X +\mu_Y )^2 \bigg)^k \sum_{j=0}^k \frac{1}{\Gamma(n/2+k-j)}\binom{k}{j} 
								\bigg( \frac{1+\rho}{1-\rho} \frac{\mu_X -\mu_Y }{\mu_X+\mu_Y } \bigg)^{2j}\notag\\
             &\quad \times U\bigg(1-\frac{n}{2}-k+j,2-n-k,z\bigg),   
   \end{align}
where $z=2x/(1-\rho^2)$ and
   \[ C(x) = \frac{(1-\rho^2)^{n/2-1}}{2^{n-1}} \exp\bigg(-\frac{n(\mu_X^2+\mu_Y^2-2\rho\mu_X\mu_Y)}
			       {2(1-\rho^2)} - \frac{x}{1+\rho}\bigg)\,.\]
Applying Kummer's transformation (\ref{KummerU}) to the  confluent hypergeometric function of the second kind in (\ref{for17}) now yields          
   \begin{align*}
	    f_{S_n}(x) &= C(x) z^{n-1} \sum_{k=0}^\infty \dfrac{\left( n z/8 \right)^k}{k!} \bigg(\frac{1-\rho}{1+\rho} 
						    (\mu_X +\mu_Y )^2 \bigg)^k \notag \\
						 &\quad \times \sum_{j=0}^k \frac{1}{\Gamma(n/2+k-j)}\binom{k}{j} 
								\bigg( \frac{1+\rho}{1-\rho} \frac{\mu_X -\mu_Y }{\mu_X+\mu_Y } \bigg)^{2j}\, 
								U\bigg(\frac{n}2+j,n+k, z \bigg)\,.
	 \end{align*}
Applying the integral representation \eqref{LaplaceU}
followed by 
changing the order of summation and integration
results in 
   \begin{align*}
	    f_{S_n}(x) &= \frac{C(x) z^{n-1}}{\Gamma^2(n/2)} \int_0^\infty {\rm e}^{-z t} t^{n/2-1} (1+t)^{n/2-1} 
			          \sum_{k=0}^\infty \dfrac{[n z (1+t)]^k}{8^k\,k!} \bigg(\frac{1-\rho}{1+\rho} 
						    (\mu_X +\mu_Y )^2 \bigg)^k \notag \\
						 &\quad \times \sum_{j=0}^k \binom{k}{j} \frac{[t/(1+t)]^j}{(n/2)_{k-j} (n/2)_j} 
								\bigg(\frac{1+\rho}{1-\rho} \frac{\mu_X -\mu_Y}{\mu_X+\mu_Y} \bigg)^{2j}\, {\rm d} t\\ 
						 &= \frac{C(x) z^{n-1}}{\Gamma^2(n/2)} \int_0^\infty {\rm e}^{-z t} t^{n/2-1} (1+t)^{n/2-1} 
			          \sum_{k=0}^\infty \dfrac{[n z (1+t)]^k}{8^k\,(n/2)_k\,k!} \bigg(\frac{1-\rho}{1+\rho} 
						    (\mu_X +\mu_Y )^2 \bigg)^k \notag \\
						 &\quad \times {}_2F_1\bigg( - \frac{n}2-k+1, -k; \frac{n}2\ ; \frac{t}{1+t} 
								\bigg(\frac{1+\rho}{1-\rho}\bigg)^2 \bigg(\frac{\mu_X  
							- \mu_Y }{\mu_X+\mu_Y} \bigg)^2 \bigg)\,{\rm d} t,
	 \end{align*}
where the hypergeometric polynomial has degree ${\rm deg}({}_2F_1) = n/2+k-1$ for all fixed even positive integers $n \geq 2$, 
whilst ${\rm deg}({}_2F_1) = k$ for $n$ odd. Here, we applied the summation
   \[ \sum_{j=0}^k \binom{k}{j} \frac{w^j}{(u)_{k-j} (u)_j} = \frac1{(u)_k}\, {}_2F_1(-u-k+1, -k;\ u\ ; w)\,.\] 
Now, by virtue of Watson's formula \cite[p.\ 148, Eq.\ (2)]{GNW}, {\it viz.} 
   \begin{equation*} 
	    \sum_{k=0}^\infty \frac{(-1)^k y^{2k}}{(a)_k\,k!}\,{}_2F_1(-a-k+1, -k; b ; x^2) = \Gamma(a)\,\Gamma(b)\,x^{1-b}\, 
			           y^{2-a-b}\, J_{a-1}(2y)\, J_{b-1}(2xy)\,,
	 \end{equation*}
we conclude that
   \begin{align}
	    f_{S_n}(x) &= \frac{\tilde{D}_1(x)}{{\rm i}^{n-2}} \int_0^\infty (t(t+1))^{(n-2)/4} 
			          \exp\bigg(-\frac{2x t}{1-\rho^2}\bigg) \,
			          J_{\frac{n}{2}-1}\bigg(\mathrm{i}\sqrt{\frac{1+\rho}{1-\rho}}|\mu_X-\mu_Y|\sqrt{nzt/2}\bigg) \notag \\ 
						 &\quad \times J_{\frac{n}2-1}\bigg(\mathrm{i}\sqrt{\frac{1-\rho}{1+\rho}}|\mu_X+\mu_Y|\sqrt{nz(1+t)/2} \bigg)	\,{\rm d} t, \nonumber
	\end{align}	
where $\tilde{D}_1(x)$ is the coefficient $D_1(x)$ with $\sigma_X=\sigma_Y=1$, and we recall that $z=2x/(1-\rho^2)$. Finally, using that $J_\nu({\rm i}w) = {\rm i}^\nu\, I_\nu(w)$ (which is immediate from the definitions (\ref{jdef}) and (\ref{idef})) yields the formula (\ref{Watson2}) in the case $\sigma_X=\sigma_Y=1$, as required.

Finally, we consider the case $x<0$.
Observe that $Z=XY=_d-X'Y'$, where $(X', Y')$ is a bivariate normal random vector with mean vector $(\mu_X,-\mu_Y)$, variances $(\sigma_X^2,\sigma_Y^2)$ and correlation coefficient $-\rho$. Formulas for the PDF in the case $x<0$ thus follow from replacing $(x,\rho,\mu_Y)$ by $(-x,-\rho,-\mu_Y)$ in equations (\ref{Watson2}), (\ref{222}) and (\ref{333}).
\end{proof}


In the case that $\rho=0$ and one of the means is equal to zero, we can obtain an alternative integral representation for the density of $S_n$ from the Neumann series of the first type of the modified Bessel functions of the second kind 
 \eqref{simple2}.

\begin{theorem} \label{theorem7}
Let $\rho=0$, $\mu_Y=0$ and suppose $\mu_X \in \mathbb{R}$. Then, for $x \in \mathbb{R}$,
   \begin{align} \label{simple3}
      f_{S_n}(x) &= \frac{|x|^{n-1} }{\sqrt{\pi}\, 
						  ( 2\sigma_X \sigma_Y)^n \Gamma(n/2)}\exp\left(\!-\dfrac{n\mu_X^2}{2\sigma_X^2}\right)\nonumber\\
         &\quad\times\int_0^\infty t^{-(n+1)/2} \exp\bigg(\!-t -\frac{x^2}{4 \sigma_X^2 \sigma_Y^2 t}\bigg)\,
							 {}_0F_1\bigg(-;\frac{n}2; \frac{n \mu_X^2 x^2}{8 \sigma_X^4 \sigma_Y^2 t}\bigg)\, {\rm d}t. 
	 \end{align}
\end{theorem}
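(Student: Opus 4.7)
The plan is to start directly from the Neumann series representation of $f_{S_n}(x)$ given in Theorem \ref{rho0} and collapse the Bessel series into a single integral by inserting an integral representation for the modified Bessel function of the second kind inside each summand. Specifically, I would use the classical formula
\begin{equation*}
K_\nu(z) = \frac{1}{2}\bigg(\frac{z}{2}\bigg)^{\!\nu} \int_0^\infty t^{-\nu-1} \exp\bigg(\!-t - \frac{z^2}{4t}\bigg)\,\mathrm{d}t \qquad (z>0),
\end{equation*}
with $\nu = (n-1)/2+k$ and $z=|x|/(\sigma_X\sigma_Y)$. Applied to each term of \eqref{simple2}, this produces a factor $(|x|/(2\sigma_X\sigma_Y))^{(n-1)/2+k}$ outside the integral that combines with the existing $|x|^{(n-1)/2+k}$ in the summand to give a clean overall power $|x|^{n-1+2k}/(2\sigma_X\sigma_Y)^{(n-1)/2+k}$, while the $k$-dependence inside the integral becomes $t^{-(n+1)/2-k}$.

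The second step is to interchange summation and integration. This is justified by Tonelli's theorem, since every factor in sight (including the Bessel integrand) is non-negative for $x\neq0$, and the original Neumann series in Theorem \ref{rho0} converges absolutely there. After the exchange, collecting all $k$-dependent quantities under the integral gives
\begin{equation*}
\sum_{k=0}^\infty \frac{1}{k!\,\Gamma(n/2+k)}\bigg(\frac{n\mu_X^2 x^2}{8\sigma_X^4\sigma_Y^2 t}\bigg)^{\!k} = \frac{1}{\Gamma(n/2)}\,{}_0F_1\bigg(-;\frac{n}{2};\frac{n\mu_X^2 x^2}{8\sigma_X^4\sigma_Y^2 t}\bigg),
\end{equation*}
by the defining series of ${}_0F_1$ together with the identity $\Gamma(n/2+k) = \Gamma(n/2)(n/2)_k$.

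It remains to consolidate the prefactors. Tracking the factor $2^{(1-n)/2}/(\sqrt{\pi}\,(\sigma_X\sigma_Y)^{(n+1)/2})$ from Theorem \ref{rho0} together with the $\tfrac{1}{2}$ and $(2\sigma_X\sigma_Y)^{-(n-1)/2}$ produced by the Bessel integral identity, the multiplicative constant collapses cleanly to $|x|^{n-1}/((2\sigma_X\sigma_Y)^n \sqrt{\pi}\,\Gamma(n/2))$, yielding precisely \eqref{simple3}. I expect the only real obstacle to be this bookkeeping of constants and powers of $2$, $\sigma_X$ and $\sigma_Y$; conceptually the argument is simply one Bessel-to-integral substitution followed by a Tonelli-justified exchange of sum and integral, and then recognition of the resulting power series as a confluent ${}_0F_1$.
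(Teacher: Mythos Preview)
Your proposal is correct and follows essentially the same route as the paper: insert the integral representation \eqref{watson1} for $K_{(n-1)/2+k}$ into each term of the Neumann series \eqref{simple2}, swap sum and integral, and identify the resulting power series as ${}_0F_1(-;n/2;\cdot)$. The only cosmetic differences are that the paper works with $\sigma_X=\sigma_Y=1$ and a shorthand $w=n\mu_X^2|x|/4$, whereas you carry the general $\sigma_X,\sigma_Y$ throughout and add an explicit Tonelli justification for the interchange that the paper leaves implicit.
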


\begin{remark}
Bearing in mind relation (\ref{fi}), one can express the $_{0}F_1$ function in the integrand in formula (\ref{simple3}) in terms of the modified Bessel function of the first kind.    
\end{remark}

\begin{proof}
From the integral expression \eqref{watson1} we transform \eqref{simple2}, taking the shorthand $w := n \mu_X^2 |x|/4$,
getting 
   \begin{align} \label{for19}
	    f_{S_n}(x) &= \frac{|x|^{(n-1)/2}  }{2^{(n-1)/2} \sqrt{\pi}}\exp\bigg(\!-\frac{n\mu_X^2}{2}\bigg) \sum_{k=0}^{\infty}\frac{w^k}
								{k! \Gamma(n/2+k)} K_{\frac{n-1}2+k}(|x|) \notag \\
						 &= \frac{|x|^{n-1} }{2^n \sqrt{\pi} \Gamma(n/2)}\exp\bigg(\!-\frac{n\mu_X^2}{2}\bigg)\int_0^\infty t^{-(n+1)/2} \exp\bigg(\!-t -\frac{x^2}{4 t}\bigg)   
								\sum_{k=0}^\infty \frac{1}{(n/2)_k\, k!}\bigg(\frac{w |x|}{2t}\bigg)^k \, {\rm d}t. \nonumber
	 \end{align}						    
In turn, the inner-most series from above is detected as the confluent hypergeometric function
   \[ \sum_{k=0}^\infty \frac{1}{(n/2)_k \,k!}\left(\frac{w |x|}{2t}\right)^k = {}_0F_1\bigg(-;\frac{n}2; \frac{w |x|}{2t}\bigg).\] 
This completes the proving procedure.
\end{proof} 

\subsection{Infinite divisibility}

Our infinite divisibility result is given in the following theorem.

\begin{theorem}\label{thmif} 
For any $m\geq1$, we have the decomposition $Z=_d\sum_{i=1}^m U_{i,m}$, where $U_{1,m},\ldots, U_{m,m}$ are independent and identically distributed random variables with PDF
\begin{align}\label{pdfuu}
f_{U_{1,m}}(x)&=\frac{(1-\rho^2)^{q/2-1}}{2^{n-1}\sigma_X\sigma_Y}\exp\bigg(-\frac{q}{2(1-\rho^2)}\bigg(\frac{\mu_X^2}{\sigma_X^2}+\frac{\mu_Y^2}{\sigma_Y^2}-\frac{2\rho\mu_X\mu_Y}{\sigma_X\sigma_Y}\bigg)-\frac{|x|}{\sigma_X\sigma_Y(1+\rho\,\mathrm{sgn}(x))}\bigg)\nonumber\\
&\quad\times\sum_{k=0}^\infty\sum_{j=0}^k\frac{(q/8)^k}{k!\Gamma(q/2+a_{j,k}(x))}\binom{k}{j} \bigg(\frac{1+\rho}{1-\rho}\bigg)^j\bigg(\frac{\mu_X}{\sigma_X}-\frac{\mu_Y}{\sigma_Y}\bigg)^{2j} \bigg(\frac{1-\rho}{1+\rho}\bigg)^{k-j}\nonumber\\
&\quad\times \bigg(\frac{\mu_X}{\sigma_X}+\frac{\mu_Y}{\sigma_Y}\bigg)^{2k-2j} U\bigg(1-\frac{q}{2}-a_{j,k}(x),2-q-k,\frac{2|x|}{\sigma_X\sigma_Y(1-\rho^2)}\bigg),\quad x\in\mathbb{R},    
\end{align}
where $q=1/m$.
In particular, the distribution of $Z$ is infinitely divisible.
\end{theorem}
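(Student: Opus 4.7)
\noindent\emph{Proof proposal.} The plan is to reuse the characteristic function derivation from the proof of Theorem \ref{thm1} verbatim, but with the positive integer $n$ replaced throughout by the positive real parameter $q = 1/m$. The candidate summands $U_{1,m},\ldots,U_{m,m}$ will be i.i.d.\ with common characteristic function $\varphi_Z(t)^{1/m}$; if this function is shown to be a legitimate characteristic function, then $Z =_d \sum_{i=1}^m U_{i,m}$ follows by the uniqueness theorem, and $Z$ is infinitely divisible by definition.

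First, I would raise (\ref{charz}) to the power $1/m$, obtaining a function of exactly the same structural form as the $\varphi_{S_n}(t)$ in the proof of Theorem \ref{thm1} but with $n$ replaced by $q$. Then I would execute the same chain of manipulations used there: partial fraction decomposition of the quadratic exponent, Taylor expansion of the resulting exponential, binomial expansion of the $k$th power, and term-by-term Fourier inversion via formula (\ref{int1}). None of these steps uses that $n$ is a positive integer — each is an algebraic manipulation valid for any real positive parameter. Rescaling to general $(\sigma_X, \sigma_Y)$ and combining the $x > 0$ and $x < 0$ cases using $\mathrm{sgn}(x)$ and $a_{j,k}(x)$, as in the original proof, produces exactly the density (\ref{pdfuu}).

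The final step is to verify that (\ref{pdfuu}) really does define a probability density, which is what makes $\varphi_Z(t)^{1/m}$ a bona fide characteristic function. Non-negativity of each term in the double series is automatic: the confluent hypergeometric factor $U(a,b,z)$ is positive for $z > 0$ by (\ref{positive}), the $\mu$-dependent quantities enter through the even powers $2j$ and $2k-2j$, and all remaining factors are manifestly non-negative. Integration to unity is immediate from $\varphi_Z(0)^{1/m} = 1$ together with the inversion formula.

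The principal technical obstacle is justifying the term-by-term Fourier inversion for $m > 1$: then $\varphi_Z(t)^{1/m}$ decays only like $|t|^{-q}$ at infinity and is not classically integrable, so the inversion integrals appearing in (\ref{int1}) must be interpreted in an appropriate limiting or distributional sense, and the interchange of $\sum$ and $\int$ requires a dominated-convergence justification. However, since the resulting series converges absolutely on compact subsets of $\mathbb{R}\setminus\{0\}$ and the singularity at $x = 0$ is of order at most $|x|^{q-1}$ (hence locally integrable for all $q > 0$), this can be made rigorous by approximation, completing the proof.
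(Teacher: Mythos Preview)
Your proposal is essentially the paper's argument, and all the key ingredients---replacing $n$ by $q=1/m$ in the Theorem~\ref{thm1} computation, checking non-negativity via \eqref{positive}, and reading off integration to unity from $\varphi_Z(0)^{1/m}=1$---are exactly what the paper does.

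There is, however, one logical difference worth noting, and it bears precisely on the ``principal technical obstacle'' you flag. You run the argument in the direction \emph{characteristic function $\to$ density}: take $\varphi_Z(t)^{1/m}$ and Fourier-invert term by term. As you observe, for $m>1$ the function $\varphi_Z(t)^{1/m}$ decays like $|t|^{-q}$ and is not in $L^1$, so the inversion integrals in \eqref{int1} are not classically defined and you are forced into a limiting or distributional justification. The paper sidesteps this entirely by running the argument in the opposite direction: it \emph{starts} from the formula \eqref{pdfuu}, observes that each summand is non-negative and individually in $L^1(\mathbb{R})$ (exponential decay at infinity, at worst an integrable $|x|^{q-1}$ singularity at the origin), and then computes the \emph{forward} Fourier transform $\int_{-\infty}^\infty \mathrm{e}^{\mathrm{i}tx} f_{U_{1,m}}(x)\,\mathrm{d}x$ term by term, recovering $\varphi_Z(t)^{1/m}$. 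Setting $t=0$ gives total mass $1$, and the rest follows. Since the forward transform only requires integrability of the density (not of the characteristic function), the analytic difficulty you identify simply does not arise. Your approximation sketch would also work, but the paper's reversal of direction is the cleaner fix.
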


\begin{remark} We can extend the infinite divisibility result of Theorem \ref{thmif} to include the degenerate case $\rho\in\{-1,1\}$, so that distribution of the product $Z$ is infinitely divisible for $\mu_X,\mu_Y\in\mathbb{R}$, $\sigma_X,\sigma_Y>0$ and $-1\leq\rho\leq1$. This is because in the case $\rho\in\{-1,1\}$ the product $Z$ can be expressed in the form $Z=_d\rho T+a$, where $T$ is a non-central chi-square random variable and $a\in\mathbb{R}$ is a constant. Infinite divisibilty of the product $Z$ in the degenerate case $\rho\in\{-1,1\}$ is now immediate from the fact that the non-central chi-square distribution is infinitly divisible (see \cite{bose}). 
\end{remark}

\begin{proof}
We begin by confirming that the expression (\ref{pdfuu}) does indeed define the PDF of a real-valued random variable. This means that we need to show that, for any $m\geq1$, we have $f_{U_{1,m}}(x)\geq0$ for all $x\in\mathbb{R}$, and that $\int_{-\infty}^\infty f_{U_{1,m}}(x)\,\mathrm{d}x=1$. That $f_{U_{1,m}}(x)\geq0$ for all $x\in\mathbb{R}$ is immediate from the fact that all summands in the infinite series (\ref{pdfuu}) are non-negative for all $x\in\mathbb{R}$ (due to the positivity of the modified confluent hypergeometric function of the second kind; see (\ref{positive})). Also, following the steps of the proof of Theorem \ref{thm1} we see that, for $t\in\mathbb{R}$,
\begin{align}
\int_{-\infty}^\infty\mathrm{e}^{\mathrm{i}tx}f_{U_{1,m}}(x)\,\mathrm{d}x&=\frac1{([1-(1+\rho)\mathrm{i}t][1+(1-\rho)\mathrm{i}t])^{q/2}} \nonumber\\
&\quad\times
			             \exp\bigg(\frac{-q(\mu_X^2+\mu_Y^2-2\rho\mu_X\mu_Y)t^2+2q\mu_X\mu_Y\mathrm{i}t}
									 {2[1-(1+\rho)\mathrm{i}t][1+(1-\rho)\mathrm{i}t]}\bigg).  \label{one}   
\end{align}
Setting $t=0$ in (\ref{one}) yields $\int_{-\infty}^\infty f_{U_{1,m}}(x)\,\mathrm{d}x=1$, as required.

Since (\ref{pdfuu}) does indeed define the PDF of a real-valued random variable, we have that the characteristic function of $U_{1,m}$ is given by
\begin{align*}
\varphi_{U_{1,m}}(t)= \frac1{([1-(1+\rho)\mathrm{i}t][1+(1-\rho)\mathrm{i}t])^{q/2}} 
			             \exp\bigg(\frac{-q(\mu_X^2+\mu_Y^2-2\rho\mu_X\mu_Y)t^2+2q\mu_X\mu_Y\mathrm{i}t}
									 {2[1-(1+\rho)\mathrm{i}t][1+(1-\rho)\mathrm{i}t]}\bigg),
\end{align*} 
for $t\in\mathbb{R}$. Now, let $W_m=\sum_{i=1}^mU_{i,m}$. Therefore, since $U_{1,m},\ldots, U_{m,m}$ are independent and identically distributed random variables,
\begin{align*}
\varphi_{W_m}(t)=\prod_{i=1}^m\varphi_{U_{i,m}}(t)&=\frac1{([1-(1+\rho)\mathrm{i}t][1+(1-\rho)\mathrm{i}t])^{1/2}} \\
&\quad \times
			             \exp\bigg(\frac{-(\mu_X^2+\mu_Y^2-2\rho\mu_X\mu_Y)t^2+2\mu_X\mu_Y\mathrm{i}t}
									 {2[1-(1+\rho)\mathrm{i}t][1+(1-\rho)\mathrm{i}t]}\bigg), \quad t\in\mathbb{R},
\end{align*}
which we recognise as the characteristic function of the product $Z$. Hence, by the uniqueness of characteristic functions, $W_m=_d Z$, for any $m\geq1$. This completes the proof.
\end{proof}

\appendix

\section{Special functions}\label{appa}
In this appendix, we define the special functions that are used in this paper and state some of their relevant basic properties. Unless otherwise stated, these  properties can be found in the standard reference \cite{olver}. 

The \emph{Bessel function of the first kind} is defined, for $\nu\in\mathbb{R}$ and $z\in\mathbb{C}$, by the power series
\begin{equation}\label{jdef}
J_\nu(z)=\sum_{k=0}^\infty (-1)^k\frac{(z/2)^{2k+\nu}}{k!\Gamma(k+\nu+1)}.    
\end{equation}

The \emph{modified Bessel function of the first kind} is defined, for $\nu\in\mathbb{R}$ and $z\in\mathbb{C}$, by the power series
\begin{equation}\label{idef}
I_\nu(z)=\sum_{k=0}^\infty \frac{(z/2)^{2k+\nu}}{k!\Gamma(k+\nu+1)}.    
\end{equation}
For $\nu=n-1/2$, $n=0,1,2,\ldots$, the modified Bessel function of the first kind takes an elementary form: for $x\in\mathbb{R}$,
\begin{align}\label{elem} I_{-\frac{1}{2}}(x)&=\sqrt{\frac{2}{\pi x}}\cosh(x), \\
I_{n+\frac{1}{2}}(x)&=\frac{1}{\sqrt{2\pi x}}\bigg\{\sum_{j=0}^n\frac{(-1)^j(n+j)!}{j!(n-j)!}\frac{\mathrm{e}^x}{(2x)^j}+(-1)^{n+1}\sum_{j=0}^n\frac{(n+j)!}{j!(n-j)!}\frac{\mathrm{e}^{-x}}{(2x)^j}\bigg\}.
\end{align}
The function 
$I_\nu(x)$ has the following limiting behaviour as $x\rightarrow0$,
\begin{align}
I_\nu(x)&\sim\frac{x^\nu}{2^{\nu}\Gamma(\nu+1)}, \quad \nu>-1.\label{ilim}
\end{align}

The \emph{modified Bessel function of the second kind}
can be defined, for $\nu\in\mathbb{R}$, by the Laplace--transform type integral \cite[p. 183, Eq.(15)]{GNW}\footnote{By Watson, this integral representation of 
the modified Bessel function of the second kind was probably firstly considered by Poisson in an equivalent form in 
\cite[p. 237]{Poisson}.}
   \begin{equation} \label{watson1}
	    K_\nu(x) = \frac12 \Big(\frac{x}2\Big)^\nu \int_0^\infty \exp\bigg(-t-\frac{x^2}{4 t}\bigg)\, 
			           \frac{{\rm d} t}{t^{\nu+1}}\,, \quad x>0\,.
	 \end{equation}
It is clear from (\ref{watson1}) that, for $\nu\in\mathbb{R}$,
\begin{equation}
\label{knon} K_{\nu}(x)>0,\quad x>0.    
\end{equation}
We have the following identity: for $\nu\in\mathbb{R}$ and $x>0$,
\begin{equation}
\label{par} K_{-\nu}(x)=K_{\nu}(x).    
\end{equation}

The \emph{generalized hypergeometric function} is defined, for $|x|<1$, by the power series
\begin{equation}
\label{gauss}
{}_pF_q(a_1,\ldots,a_p; b_1,\ldots,b_q;x)=\sum_{j=0}^\infty\frac{(a_1)_j\cdots(a_p)_j}{(b_1)_j\cdots(b_q)_j}\frac{x^j}{j!},
\end{equation}
and by analytic continuation elsewhere. The ascending factorial is given by $(u)_j=u(u+1)\cdots(u+j-1)$. 
   
The {\it Tricomi - or second kind - confluent hypergeometric function} can be defined by
\begin{equation*}
U(a,b,x)=\frac{\Gamma(b-1)}{\Gamma(a)}x^{1-b}{}_{1}F_1(a-b+1,2-b,x)+\frac{\Gamma(1-b)}{\Gamma(a-b+1)}{}_{1}F_1(a,b,x), \quad b\notin\mathbb{Z},   
\end{equation*}
and
\begin{equation*}
U(a,b,x)=\lim_{\beta\rightarrow b}U(a,\beta,x),\quad b\in\mathbb{Z}.   
\end{equation*}
If $a+1\geq b$, then
\begin{equation}\label{positive}
U(a,b,x)>0,\quad x>0.    
\end{equation}
We also make use of the Kummer's transformation 
   \begin{equation} \label{KummerU}
	    U(a, b, x) = x^{1-b}\, U(a-b+1, 2-b, x),
	 \end{equation}
and the Laplace transform integral representation formula,
 which reads
   \begin{equation} \label{LaplaceU}
	    U(a, b, x) = \frac1{\Gamma(a)} \int_0^\infty {\rm e}^{-x t}\, t^{a-1} (1+t)^{b-a-1}\,{\rm d}t, 
			             \quad a>0;\, x>0.
	 \end{equation}
We have the relations
\begin{align}
\label{fi}{}_0F_1(-;b; x) &= \Gamma(b)\, x^{(1-b)/2}\, I_{b-1}(2 \sqrt{x}),\\
\label{uk} U(a,2a,2x)&=\frac{1}{\sqrt{\pi}}\mathrm{e}^{x}(2x)^{1/2-a}K_{a-\frac{1}{2}}(x).
\end{align}
The function $U(a,b,x)$ has the following limiting behaviour as $x\rightarrow0$,
\begin{align}
U(a,1,x)&= -\frac{\log|x|}{\Gamma(a)}+O(1),\label{00}\\ 
U(a,b,x)&= \frac{\Gamma(1-b)}{\Gamma(a-b+1)}+o(1), \quad b<1,\;a-b+1>0.\label{000}
\end{align}


The \emph{Whittaker function} is defined, for $x\in\mathbb{R}$, by
\begin{equation}\label{link}
W_{\kappa,\mu}(x)=\mathrm{e}^{-x/2}x^{\mu+1/2}U\bigg(\frac{1}{2}+\mu-\kappa,1+2\mu,x\bigg).  
\end{equation}

The following formula corrects the definite integral formula given by \cite[p.\ 325, Eq.\ 19]{integralbook}:
\begin{equation}\label{int1}
\int_{-\infty}^\infty\mathrm{e}^{\mathrm{i}x t}(z+\mathrm{i}t)^{-\rho}(y-\mathrm{i}t)^{-\sigma}\,\mathrm{d}t=\frac{2\pi}{\Gamma(\delta)}(y+z)^{1-\rho-\sigma}\mathrm{e}^{-|x|\theta}U(1-\delta,2-\rho-\sigma,|x|(y+z)),
\end{equation}
where $y>0$, $z>0$; $\rho+\sigma>1$; $\delta=\sigma$, $\theta=y$ for $x<0$, and $\delta=\rho$, $\theta=z$ for $x\geq0$. The formula (\ref{int1}) is also valid in the case $\rho=\sigma=1/2$, which can be seen from Lemma 3.1 of \cite{np16} and an application of the formula (\ref{uk}). The correction we make is that the formula of \cite{integralbook} contains an additional factor of $x^2$, which should not be present. The formula (\ref{int1}) can alternatively be derived from equation 3.384(9) of \cite{g07} together with an application of formula (\ref{link}); the formula of \cite{g07} is expressed in terms of the Whittaker function rather than the confluent hypergeometric function of the second kind.
The following special case of the integral formula (\ref{int1}) is given by equation 3.384(9) of \cite{g07}:
\begin{align}
\int_{-\infty}^\infty \frac{\mathrm{e}^{\mathrm{i}x t}}{(1+t^2)^\rho}\,\mathrm{d}t&=\frac{2^{1-\rho}\pi}{\Gamma(\rho)}|x|^{\rho-1}W_{0,\frac{1}{2}-\rho}(2|x|)\nonumber\\
&=\frac{2^{3/2-\rho}\sqrt{\pi}}{\Gamma(\rho)}|x|^{\rho-1/2}K_{\rho-\frac{1}{2}}(|x|),\label{int11}
\end{align}
where we obtained the second equality using the standard relation $W_{0,\nu}(2x)=\sqrt{2x/\pi}K_\nu(x)$ and equation (\ref{par}). 

\section*{Acknowledgements} 

RG is funded in part by EPSRC grant EP/Y008650/1 and EPSRC grant UKRI068. TP was partially supported by the University of Rijeka under the project {\tt uniri-iskusni-prirod-23-98}. We would like to thank the reviewer for carefully reading our paper and for their helpful comments and suggestions.

\footnotesize

\end{document}